\newtheorem{thm}{Theorem}
\newtheorem{prop}{Proposition}
\newtheorem{lem}{Lemma}
\theoremstyle{definition}
\newtheorem*{defn}{Definition}
\newtheorem{cor}{Corollary}
\newtheorem*{corr}{Corollary}
\newcommand{\T}[1]{T^{(d)}_{#1}}
\DeclareMathOperator{\cpa}{ConPAut}
 \DeclareMathOperator{\pa}{PAut}
\DeclareMathOperator{\dom}{dom}
 \DeclareMathOperator{\ran}{ran}
\DeclareMathOperator{\aut}{Aut}
\newcommand{\isd}{\mathcal{IS}_d}
\newcommand{\nd}{\mathcal{N}_d}
\newcommand{\espn}[1]{S_{#1}^{P \mathcal{N}_d}}
\newcommand{\spn}{S^{P \mathcal{N}_d}}
\newcommand{\pwr}{\mathop\wr_p}
\newcommand{\ipwr}[1]{\mathop\wr_p^{#1} \isd}
\newcommand{\wisd}{\isd \pwr \isd}
\newcommand{\lc}{\mathbin\mathcal{L}}
\newcommand{\rc}{\mathbin\mathcal{R}}
\newcommand{\hc}{\mathbin\mathcal{H}}
\newcommand{\dc}{\mathbin\mathcal{D}}
\newcommand{\jc}{\mathbin\mathcal{J}}
\newcommand{\g}{\Gamma}
\newcommand{\sbs}{\subset}
\newcommand{\sbeq}{\subseteq}
\newcommand{\ra}{\rightarrow}
\newcommand{\vph}{\varphi}
\newcommand{\s}{\sigma}
\newcommand{\ld}{\{1, \ldots, d\}}
\begin{document}

\title{Combinatorics of partial wreath power of finite
inverse symmetric semigroup $\mathcal{IS}_d$}
\author{Eugenia Kochubinska}

\begin{abstract}
	We study some combinatorial properties of partial wreath $k$-th power of the semigroup $\mathcal{IS}_d$.
	In particular, we calculate its order, the number of idempotents
	and the number of D-classes.
\end{abstract}
\keywords{Wreath product, finite inverse symmetric semigroup,
	rooted tree, partial automorphism.}
\subjclass[2010]{20M18, 20M20, 05C05}
\maketitle

\section{Introduction}
The wreath product of semigroups has appeared as a generalization
to semigroups of the corresponding construction for groups.
Firstly transformation wreath product of transformation semigroups
has appeared as a natural generalization of the wreath product of
permutation groups (cf. \cite{Meldrum}). Later different
modifications have been introduced, for instance, partial wreath
product of arbitrary semigroup and semigroup of partial
transformation was defined in \cite{Petr} and construction related
to this one, namely inverse wreath product of inverse semigroups,
was proposed in \cite{Hou}. Wreath products provide a means to
construct a semigroup with certain properties. They also appear in
certain natural settings, that allows to lighten the study of
known semigroups presenting them if possible as a wreath product
of appropriate semigroups

The article discusses the partial wreath product of two finite
symmetric semigroup $\isd$ and a generalization of this
construction to the case of more then two factors. It is proved
that the partial wreath $k$-th power of the semigroup $\isd$ is
isomorphic to the appropriate subsemigroup  of  semigroup of
partial automorphisms of the rooted $k$-level $d$-regular tree. We
study some combinatorial properties of $\wr_p^k \mathcal{IS}_d$,
in particular, we calculate its order and the number of
idempotents and the number of $\mathcal D$-classes. Also, we
describe Green's relations of the partial wreath power of $\isd$
and calculate the number of $\mathcal D$-classes, the number of
elements in a given $\mathcal D$-class and the number of $\mathcal
R$- and $\mathcal L$-classes in this $\mathcal D$-class.
\section{The partial wreath power of semigroup $\isd$}
Let $\nd=\ld$. Define  $\spn$ by
\begin{equation*}
\spn=\{f:\nd \ra \isd | \dom(f) \sbeq \nd\}.
\end{equation*}
as the set of functions from subsets of $\nd$  to $\isd$. If $f,g
\in \spn$, we define the product $fg$ by:
\begin{equation*}
\dom(fg)=\dom(f)\cap\dom(g), (fg)(x)=f(x)g(x) \text{for all } x\in
\dom(fg).
\end{equation*}
If $a\in \isd, f\in \spn$, we define $f^a$ by:
\begin{equation*}
\begin{gathered}
\dom(f^a)=\{x \in \dom(a); xa\in
\dom(f)\}=(\ran(a)\cap\dom(f))a^{-1}\\
(f^a)(x)=f(xa).
\end{gathered}
\end{equation*}
\begin{defn}
The partial wreath square of semigroup $\isd$ is defined as the
set $\{(f,a)\in \spn \times \isd\,|\,\dom(f)=\dom(a) \}$ with
composition defined by
$$(f,a)\cdot (g,b)=(fg^a,ab)$$
Denote it by $\isd \pwr \isd$.
\end{defn}
The partial wreath square of $\isd$  is a semigroup, moreover, it
is an inverse semigroup \cite[Lemmas 2.22 and 4.6]{Meldrum}. We
may recursively define any  partial wreath power of the finite
inverse symmetric semigroup.
\begin{defn}
The partial wreath $k$-th power of semigroup $\isd$ is defined as
semigroup $\ipwr{k}=\big( \ipwr{k-1} \big) \pwr \isd =\{(f,a) \sbs
\espn{k-1} \times \isd\,|\,\dom(f)=dom(a)\}$ with composition
defined by
$$(f,a) \cdot (g,b)=(fg^a,ab),$$ where $\espn{k-1}=\{f:\nd \ra
\pwr^{k-1}\isd,\,\dom(f) \sbeq \nd\}$, $\pwr^{k-1} \isd$ is the
partial wreath $(k-1)$-th power of semigroup $\isd$
\end{defn}
For an arbitrary function $F$ we denote
$F^k(x)=\underset{k}{\underbrace{F(F\ldots (F}}(x))\ldots)$.
\begin{prop}
$|\ipwr{k}|=S^k(1)$, where $S(x)=\sum_{i=1}^d \binom{n}{i} ^2
i!x^i$
\end{prop}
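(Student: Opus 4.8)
The plan is to reduce the claim to the one-step recursion $|\ipwr{k}| = S\big(|\ipwr{k-1}|\big)$, to check the base case $|\isd| = S(1)$, and then to iterate $S$ by induction on $k$. The recursion is the heart of the matter: once the order of the previous wreath power is known, the elements of $\ipwr{k}$ are counted by a direct enumeration of the ``base'' coordinate.

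By definition $\ipwr{k} = \big(\ipwr{k-1}\big)\pwr\isd$ consists of the pairs $(f,a)$ with $a\in\isd$, $f\in\espn{k-1}$, and $\dom(f)=\dom(a)$. Since $\espn{k-1}$ is the set of all functions from subsets of $\nd$ into $\ipwr{k-1}$, the requirement $\dom(f)=\dom(a)$ means exactly that $f$ is an \emph{arbitrary} map $\dom(a)\to\ipwr{k-1}$; no relation between the values of $f$ and the partial bijection $a$ is imposed. I would then stratify the count by $i = |\dom(a)|$. A partial bijection $a\in\isd$ with $|\dom(a)| = i$ is given by an $i$-subset of $\nd$ for its domain, an $i$-subset for its range, and a bijection between them, so there are $\binom{d}{i}^2 i!$ of them; for each such $a$ there are $|\ipwr{k-1}|^{\,i}$ choices of $f$. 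Summing over $0\le i\le d$,
\[
|\ipwr{k}| \;=\; \sum_{i=0}^{d}\binom{d}{i}^2 i!\,\big(|\ipwr{k-1}|\big)^{i} \;=\; S\big(|\ipwr{k-1}|\big),
\]
the $i=0$ summand (equal to $1$) accounting for the zero element. Carrying out the same count with $\ipwr{k-1}$ replaced by a one-point set gives $|\isd| = \sum_{i=0}^{d}\binom{d}{i}^2 i! = S(1)$, i.e.\ the case $k=1$, and applying the recursion $k-1$ more times yields $|\ipwr{k}| = S^{k}(1)$.

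This is a bookkeeping induction, and I do not anticipate a real obstacle. The one step that needs genuine care is describing the fibre of the projection $(f,a)\mapsto a$: one must verify, directly from the recursive definition of the partial wreath power, that the sole constraint tying $f$ to $a$ is $\dom(f)=\dom(a)$, so that this fibre has size exactly $|\ipwr{k-1}|^{|\dom(a)|}$ and the whole count factors into the polynomial $S$ evaluated at $|\ipwr{k-1}|$.
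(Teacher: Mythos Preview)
Your argument is correct and matches the paper's proof essentially line for line: induction on $k$, base case $|\isd|=S(1)$, and the recursion $|\ipwr{k}|=S(|\ipwr{k-1}|)$ obtained by stratifying the pairs $(f,a)$ according to the size of $\dom(a)$ and observing that the fibre over a fixed $a$ has cardinality $|\ipwr{k-1}|^{|\dom(a)|}$. The only cosmetic discrepancy is that you sum from $i=0$ while the paper's $S(x)$ is written as $\sum_{i=1}^{d}$; since the $i=0$ term contributes the constant $1$ (the empty map), this is a convention issue in the paper rather than a flaw in your reasoning.
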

\begin{proof}
We provide the proof by induction on $k$.

Let $k=1$, then $|\isd|=\sum_{i=1}^d \binom{n}{i}^2i!=S(1)$ (cf.
\cite{GM}).

Assume that we know the order of the partial wreath $(k-1)$-th
power of semigroup $\isd$: $|\ipwr{k-1}|=S^{k-1}(1)$. Prove that
$|\ipwr{k}|=S^k(1)$. The elements of semigroup $\ipwr{k-1}$ are
pairs $(f,a)\in \espn{k-1} \times {\isd}$ with $\dom(f)=\dom(a)$.
Let $P_A=\{a\in \isd| \dom(a)=A\}$. Then the number of all such
pairs $(f,a)$ is equal to
$$\sum \limits_{A\sbs \nd} \Big\vert \ipwr {k-1} \Big\vert^{|A|} \cdot
\big|P_A\big|=\sum_{i=1}^d\Big|\ipwr{k-1}\Big|^i \binom{n}{i}^2
i!=S(\big|\ipwr{k-1}\big|)=S(S^{k-1}(1))=S^k(1).$$
\end{proof}
Let $E(\isd)$ be the set of idempotents  of semigroup $\isd$.
\begin{prop}
An element $(f,a) \in \wisd$ is an idempotent if and only if $a
\in E(\isd)$ and $f(\dom(a))\subseteq E(\isd)$.
\end{prop}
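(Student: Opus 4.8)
The plan is to unwind what $(f,a)^2=(f,a)$ means componentwise. By the definition of composition in $\wisd$ we have $(f,a)\cdot(f,a)=(ff^a,a^2)$, so $(f,a)$ is an idempotent precisely when $a^2=a$ in $\isd$ and $ff^a=f$ in $\spn$. The first condition is exactly $a\in E(\isd)$, so the whole argument reduces to deciding when $ff^a=f$ under the standing assumption that $a$ is idempotent; I would prove the two implications simultaneously by establishing this equivalence.

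First I would record the shape of an idempotent of $\isd$: it is the identity map on its domain, so $\dom(a)=\ran(a)$ and $xa=x$ for every $x\in\dom(a)$. Since membership of $(f,a)$ in $\wisd$ forces $\dom(f)=\dom(a)$, the domain formula $\dom(f^a)=(\ran(a)\cap\dom(f))a^{-1}$ collapses to $\dom(f^a)=\dom(a)a^{-1}=\dom(a)=\dom(f)$. Hence $\dom(ff^a)=\dom(f)\cap\dom(f^a)=\dom(f)$, i.e. multiplying by $f^a$ does not shrink the domain, and $ff^a=f$ is equivalent to the pointwise equality $(ff^a)(x)=f(x)$ for all $x\in\dom(f)=\dom(a)$.

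Next I would carry out the pointwise computation: for $x\in\dom(a)$, using $xa=x$,
\[
(ff^a)(x)=f(x)\cdot f^a(x)=f(x)\cdot f(xa)=f(x)\cdot f(x)=f(x)^2 .
\]
Thus $ff^a=f$ if and only if $f(x)=f(x)^2$ for every $x\in\dom(a)$, that is, $f(\dom(a))\subseteq E(\isd)$. Combining this with $a\in E(\isd)$ yields the stated characterisation in both directions, and no induction on $k$ is needed since the claim concerns only the partial wreath square.

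The hard part will not be any deep idea but the partial-map bookkeeping, specifically the verification that $\dom(f^a)=\dom(f)$ when $a$ is idempotent and $\dom(f)=\dom(a)$; this is the step that guarantees the product $ff^a$ is still defined on all of $\dom(f)$, after which the identity $(ff^a)(x)=f(x)^2$ and the equivalence are immediate.
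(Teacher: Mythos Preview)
Your proposal is correct and follows essentially the same approach as the paper: both unwind $(f,a)^2=(ff^a,a^2)$ to obtain $a^2=a$ and $ff^a=f$, then use the idempotency of $a$ to reduce $(ff^a)(x)$ to $f(x)^2$ and conclude. Your version is in fact a bit more careful than the paper's, since you explicitly verify $\dom(f^a)=\dom(f)$ before invoking the pointwise equality, and you use $xa=x$ directly rather than the paper's slightly roundabout computation via $f(ca^2)=f(ca)$.
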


\begin{proof}
Let $(f,a)$ be idempotent, then $(f,a)(f,a)=(ff^a,a^2)=(f,a)$.
Hence, $ff^a=f$, $a^2=a$, i.e., $a\in \isd$ is an idempotent. It
follows from the equality $ff^a=f$ that for any $c \in \dom(a)$
$ff^a(ca)=f(ca)f^a(ca)=f(ca)f(ca^2)=f(ca)f(ca)$.

Conversely, let $(f,a)\in \ipwr{k}$ be such an element that $a \in
E(\isd)$ and $f(\dom(a))\subseteq E(\isd)$. Then for any $c \in
\dom(a)$ $f(ca)=f(ca)f(ca)$. So
$f(ca)=f(ca)f(ca)=f(ca)f(ca^2)=ff^a(ca)$. Since it holds for all
 $c \in \dom(a)$, we have
$(f,a)(f,a)=(f,a)$.
\end{proof}

Let $\T{k}$ be a rooted $k$-level $d$-regular tree. The partial
automorphism of the tree $\T{k}$ is such partial (i.e. not
necessarily completely defined) injective map $\varphi: V \T{k}
\rightarrow V \T{k}$ that subgraphs generated by domain of
$\varphi$ and range of $\varphi$ are isomorphic (i.e. $\varphi$
maps isomorphically certain subgraph of the tree $\T{k}$ on
another subgraph of the same tree). Partial automorphisms form a
semigroup under composition $ab(x)=b(a(x))$ , we will denote it by
$\pa \T{k}$. Evidently, this semigroup is an inverse semigroup.
Let $\cpa T$ be the semigroup of partial automorphisms of the tree
$T$,  defined on a connected graph containing root and preserving
the level of vertices. Further we will consider only partial
automorphisms of this type.
\begin{thm}\label{thm:paut_wr}
Let $\T{k}$ be a rooted $k$-level $d$-regular tree. Then
$$\cpa \T{k} \cong\ipwr{k}.$$
\end{thm}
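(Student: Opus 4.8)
The plan is to proceed by induction on $k$, constructing an explicit isomorphism $\Phi_k\colon \cpa\T{k}\to\ipwr{k}$ that encodes a level-preserving partial automorphism by its "action near the root" together with the family of partial automorphisms it induces on the $d$ principal subtrees hanging off the root. For the base case $k=1$ the tree $\T{1}$ is a root with $d$ leaves, and a level-preserving connected partial automorphism containing the root is exactly a partial permutation of the leaf set $\{1,\dots,d\}$, so $\cpa\T{1}\cong\isd=\ipwr{1}$; one checks the composition conventions match, recalling that in $\pa\T{k}$ composition is written $ab(x)=b(a(x))$, which agrees with the left-to-right convention used in the definition of $f^a$ and of the product in $\isd\pwr\isd$.

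For the inductive step, label the $d$ children of the root $1,\dots,d$, and for each $i$ let $T_i\cong\T{k-1}$ be the subtree rooted at child $i$. Given $\varphi\in\cpa\T{k}$, its restriction to level $1$ is a partial permutation $a\in\isd$ with some domain $A=\dom(a)\subseteq\{1,\dots,d\}$; because $\varphi$ preserves levels and its domain is connected and contains the root, for each $i\in A$ the map $\varphi$ carries $T_i$ isomorphically onto a subgraph of $T_{ia}$, hence induces an element $f(i)\in\cpa T_{ia}\cong\cpa\T{k-1}$, and for $i\notin A$ nothing is defined. I would then set $\Phi_k(\varphi)=\big(\,\Phi_{k-1}\!\circ f,\ a\,\big)$, where $\Phi_{k-1}\circ f\in\espn{k-1}$ has domain $A=\dom(a)$, so the pair lies in $\ipwr{k}$. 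Conversely, a pair $(g,a)\in\ipwr{k}$ with $\dom(g)=\dom(a)=A$ reassembles a level-preserving partial automorphism of $\T{k}$: send child $i\mapsto ia$ for $i\in A$ and glue in the partial automorphism $\Phi_{k-1}^{-1}(g(i))$ of $T_i$ onto $T_{ia}$; connectedness and the root being in the domain are automatic. These two assignments are mutually inverse bijections essentially by construction.

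The substantive point is checking that $\Phi_k$ is a homomorphism, i.e. that $\Phi_k(\varphi\psi)=\Phi_k(\varphi)\cdot\Phi_k(\psi)=\big(\,(\Phi_{k-1}\!\circ f)(\Phi_{k-1}\!\circ h)^{a},\,ab\,\big)$ when $\Phi_k(\varphi)=(\Phi_{k-1}\!\circ f,a)$ and $\Phi_k(\psi)=(\Phi_{k-1}\!\circ h,b)$. The level-$1$ component is clear: the restriction of $\varphi\psi$ to level $1$ is $ab$. For the subtree components one must track where a given subtree $T_i$ goes: under $\varphi$ it maps into $T_{ia}$ via $f(i)$, and then $T_{ia}$ maps under $\psi$ into $T_{iab}$ via $h(ia)$, so the composite induced automorphism on $T_i$ is $f(i)\,h(ia)$, with domain forcing $i\in\dom(a)$ and $ia\in\dom(b)$ — which is exactly the domain condition for $fh^a$ and the value $(fh^a)(i)=f(i)h(ia)$ by the definition of $h^a$. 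Applying $\Phi_{k-1}$ and using the inductive hypothesis that it is a homomorphism converts $f(i)h(ia)$ into $\Phi_{k-1}(f(i))\Phi_{k-1}(h(ia))$, matching the $\ipwr{k}$ product. The main obstacle I anticipate is purely bookkeeping: being careful that the "subgraph generated by $\dom(\varphi)$" really does decompose as root plus a disjoint union of (top-truncated copies of) $\T{k-1}$'s sitting under the level-$1$ vertices in $\dom(a)$, so that the inductive decomposition is well defined and the domain conditions $\dom(f)=\dom(a)$ propagate correctly; once that structural description is nailed down, the homomorphism check is the routine computation sketched above.
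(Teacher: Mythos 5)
Your proposal is correct and follows essentially the same route as the paper: induction on $k$, with the base case identifying $\cpa\T{1}$ with $\isd$ and the inductive step decomposing a partial automorphism into its level-one partial permutation $a$ and the family of induced partial automorphisms of the subtrees rooted at the children in $\dom(a)$. In fact your write-up is more complete than the paper's, which only sets up the bijection $\vph\mapsto(\vph|_{T_{k-1}},\vph|_{V_1})$ and omits the verification that $(fh^a)(i)=f(i)h(ia)$ matches the composite action on the subtree $T_i$ — the check you carry out explicitly.
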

\begin{proof}
We provide the proof by induction on $k$.

Let $\T{1}$ be one-level tree,  $\cpa \T{1}$ be the semigroup of
partial automorphisms of this tree defined as above. By
definition, $\cpa \T{1}$ contains partial automorphisms defined on
a connected subgraph and which don't move the root vertex and
preserve the level of vertices, then every partial automorphism
$\vph \in \cpa \T{1}$ is determined only by the vertices
permutation satisfying condition
$$\vph(i)=\left\{%
\begin{array}{ll}
  a_i , & \hbox{if $i \in \dom(\vph)$ ;} \\
  \emptyset, & \hbox{otherwise.} \\
\end{array}%
\right.$$ In other words, $\varphi$ is the partial permutation
from $\isd$. So, every partial automorphism $\varphi \in \cpa
\T{1}$ is uniquely defined by partial permutation $\s \in \isd$.
Thus, we have one-to-one correspondence between $\cpa \T{1}$ and
$\isd$. Hence $\cpa \T{1} \cong \isd.$

Assume that $\ipwr{k-1} \cong \cpa_{k-1}$.

Prove that $\ipwr{k} \cong \cpa_{k}$. Let $\vph \in \cpa_{k}$ and
$V_i$ be the $i$-th level of the tree $\T{k}$. Define a map $\psi:
\cpa_{k} \ra \ipwr{k}$ by: $\vph \mapsto
(\vph|_{T_{k-1}},\vph|_{V_1} )$, where $\vph|_{T_{k-1}}$ is a
partial automorphism that acts on the rooted subtrees, which root
vertices lie on the first level of the tree $\T{k}$ and belong to
$\dom(\vph|_{V_1})$. Hence $\vph|_{V_1} \in \isd$ and
$\vph|_{T_{k-1}}: \dom(\vph|_{V_1}) \ra \ipwr{k-1}$. Thus we may
establish correspondence between given partial automorphism $\vph
\in \cpa_{k}$ and a unique pair $(\s, f)$, where $\s \in \isd$,
$f:\nd \ra \ipwr{k-1},\dom(f)=\dom(\s)$. And we have $\cpa \T{k}
\cong\ipwr{k}$.

\end{proof}

\begin{prop}
Let $E(\ipwr{k})$ be the set of idempotents of semigroup
$\ipwr{k}$. Then $\big|E(\ipwr{k})\big|=F^k(1)=
\underset{k}{\underbrace{(((1+1)^d+1)^d \ldots+1)^d}}$, where
$F(x)=(x+1)^d$.
\end{prop}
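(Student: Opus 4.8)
The plan is to prove the formula by induction on $k$, mirroring the structure of the proof of the order formula. The base case $k=1$ is the statement that $|E(\isd)| = (1+1)^d = 2^d$, which is classical: an idempotent of $\isd$ is precisely an identity partial permutation $\mathrm{id}_A$ on a subset $A \sbeq \nd$, and there are $2^d$ such subsets. So $|E(\ipwr{1})| = F(1) = F^1(1)$.

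For the inductive step, I would assume $|E(\ipwr{k-1})| = F^{k-1}(1)$ and invoke the previous proposition (the characterization of idempotents in a partial wreath square, which extends verbatim to $\ipwr{k} = \ipwr{k-1} \pwr \isd$): a pair $(f,a)$ is idempotent if and only if $a \in E(\isd)$ and $f(\dom(a)) \sbeq E(\ipwr{k-1})$. Now I would count such pairs by summing over the possible domains. Since an idempotent $a \in E(\isd)$ is determined by its domain $A = \dom(a) \sbeq \nd$, and given that domain the condition on $f$ is that $\dom(f) = A$ and $f$ maps each of the $|A|$ points of $A$ into the set $E(\ipwr{k-1})$, the count is
\begin{equation*}
|E(\ipwr{k})| = \sum_{A \sbeq \nd} |E(\ipwr{k-1})|^{|A|} = \sum_{i=0}^d \binom{d}{i} |E(\ipwr{k-1})|^i = \bigl(|E(\ipwr{k-1})| + 1\bigr)^d = F\bigl(|E(\ipwr{k-1})|\bigr).
\end{equation*}
Combined with the inductive hypothesis this gives $|E(\ipwr{k})| = F(F^{k-1}(1)) = F^k(1)$, and unwinding the recursion $F(x) = (x+1)^d$ starting from $1$ yields the nested expression in the statement.

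One small point I would be careful about: the empty domain. When $A = \emptyset$, the corresponding element is $(\emptyset, \emptyset)$ (the empty map paired with the empty partial permutation), which is indeed idempotent and contributes the $i=0$ term $\binom{d}{0}|E(\ipwr{k-1})|^0 = 1$ to the sum — this is exactly the "$+1$" inside $F$, and it is why the sum runs from $i=0$ rather than $i=1$ as in the order formula. There is no real obstacle here; the main thing to verify carefully is that the idempotent characterization from the preceding proposition carries over to the iterated wreath power with $\ipwr{k-1}$ in place of $\isd$ in the first coordinate, which it does since that proof only used that the first coordinate lies in a semigroup and the product is computed coordinatewise via $(f,a)(g,b) = (fg^a, ab)$.
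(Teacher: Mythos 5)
Your proof is correct, but it takes a genuinely different route from the paper. The paper deduces the count from Theorem~\ref{thm:paut_wr}: idempotents of $\ipwr{k}$ correspond to identity maps on connected rooted subgraphs of $\T{k}$, and the recursion $(F^{k-1}(1)+1)^d$ is obtained by choosing, for each first-level vertex, either to omit it or to attach one of the $F^{k-1}(1)$ admissible subgraphs of the $(k-1)$-level subtree below it. You instead count directly in the wreath product, using the algebraic characterization of idempotents ($a\in E(\isd)$ and $f(\dom(a))\sbeq E(\ipwr{k-1})$) and summing over the domain $A=\dom(a)\sbeq\nd$. Your approach is more self-contained --- it does not rely on the tree isomorphism, only on the preceding proposition (whose proof, as you note, works verbatim with $\ipwr{k-1}$ in the first coordinate) --- while the paper's geometric picture is reused later for counting $\mathcal D$-classes, so the detour pays off there. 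One point in your favour: you explicitly account for the $i=0$ term coming from the empty domain, which is exactly the ``$+1$'' in $F(x)=(x+1)^d$; the paper's displayed sum runs from $i=1$, so as written it equals $(F^{k-1}(1)+1)^d-1$, and your treatment quietly repairs that slip.
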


\begin{proof}
It follows from the theorem  \ref{thm:paut_wr} that there exists
bijection between set of idempotents of semigroup $\ipwr{k}$ and
set of connected subgraphs of the tree $\T{k}$ with different
domains. We calculate number of idempotents as a number of such
subgraphs of the tree $\T{k}$, because idempotents of $\pa$ are
identity maps:  $id_{\g}: \g \ra \g$,  $\g \sbs \T{k}$.

We compute their number by induction on $k$. Let $k=1$, then
$\ipwr{1}=\isd$, consequently $|E(\ipwr{1})|=|E(\isd)|=2^d=F(1)$.

Assume that $|E(\ipwr{k-1})|=F^{k-1}(1)=|E(\pa \T{k-1})|$.

Find now the number of idempotents of semigroup $\pa \T{k}$. For
all $i=1, \ldots,d$ we can choose
 $i$-element subset   among the first level
 vertices in $\binom{d}{i}$ ways. Denote these subsets $A_i^j$, $i=1, \ldots,d$, $j=1, \ldots, \binom{d}{i}$
  Each vertex from $A_i^j$ is the root vertex of $(k-1)$-level tree. We know the
  number of idempotents of the semigroup $\pa \T{k-1}$, then

$|E(\ipwr{k})|=|E(\pa \T{k})|=
\sum_{i=1}^d\binom{d}{i}(F^{k-1}(1))^i=(F^{k-1}(1)+~1)^d=F(F^{k-1})=F^k(1)$.
\end{proof}
\section{Combinatorics of Green's relations}
\begin{thm}  \label{green}
Let $(f,a), (g,b) \in \ipwr{k}$. Then
\begin{enumerate} \rm
\item \label{it:itl} $(f,a)$ $\lc$ $(g,b)$ if and only if
$\ran(a)=\ran(b)$ and  $g^{b^{-1}}(z) \lc f^{a^{-1}}(z)$ for all
$z \in \ran(a)$ , where $a^{-1}$ is the inverse element for $a$;

\item \label{it:itr} $(f,a)$ $\rc$ $(g,b)$ if and only if
$\dom(a)=\dom(b)$ and $f(z) \rc g(z)$ for all $z \in \dom(a)$ ;

\item \label{it:ith} $(f,a)$ $\hc$ $(g,b)$ if and only if
$\ran(a)=\ran(b)$ and $\dom(a)=\dom(b)$,
$g^{b^{-1}}(z)~\lc~f^{a^{-1}}(z)$ and $f(z) \rc g(z)$ for $z \in
\dom(a) \cap \ran(a)$;

\item $(f,a)$ $\dc$ $(g,b)$ if and only if there exists a
bijection map $x:\dom(b) \ra \dom(a)$ such that $f(zx)$ $\dc$
$g(z)$.

\item $\dc$=$\jc$.
\end{enumerate}
\end{thm}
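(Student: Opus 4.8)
The strategy rests on the fact that every $\ipwr{k}$ is a \emph{finite} inverse semigroup: finiteness makes part~(5) automatic, since $\dc=\jc$ in any finite semigroup, while being inverse lets me read off $\lc$, $\rc$, $\hc$ from the two idempotents $(f,a)^{-1}(f,a)$ and $(f,a)(f,a)^{-1}$ attached to an element. So the first step is a routine computation of inverses and of these idempotents: one finds $(f,a)^{-1}=(\bar f,a^{-1})$, where $\dom(\bar f)=\ran(a)$ and $\bar f(y)=\big(f(ya^{-1})\big)^{-1}$, and hence $(f,a)^{-1}(f,a)=(e,a^{-1}a)$ with $e(z)=\big(f^{a^{-1}}(z)\big)^{-1}f^{a^{-1}}(z)$ for $z\in\ran(a)$, while $(f,a)(f,a)^{-1}=(e',aa^{-1})$ with $e'(z)=f(z)f(z)^{-1}$ for $z\in\dom(a)$; here $aa^{-1}$ and $a^{-1}a$ denote the partial identities on $\dom(a)$ and $\ran(a)$, the values $e(z),e'(z)$ are idempotents of $\ipwr{k-1}$, so $(e,a^{-1}a),(e',aa^{-1})$ are idempotents of $\ipwr{k}$.

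For part~(1): since $\ipwr{k}$ is inverse, $(f,a)\lc(g,b)$ iff $(f,a)^{-1}(f,a)=(g,b)^{-1}(g,b)$. Equating the $\isd$-components forces $a^{-1}a=b^{-1}b$, i.e.\ $\ran(a)=\ran(b)$; equating the remaining components forces $\big(f^{a^{-1}}(z)\big)^{-1}f^{a^{-1}}(z)=\big(g^{b^{-1}}(z)\big)^{-1}g^{b^{-1}}(z)$ for all $z\in\ran(a)$, which in the inverse semigroup $\ipwr{k-1}$ is precisely $f^{a^{-1}}(z)\lc g^{b^{-1}}(z)$. Part~(2) is the mirror computation with $(f,a)(f,a)^{-1}$: one gets $\dom(a)=\dom(b)$ together with $f(z)f(z)^{-1}=g(z)g(z)^{-1}$, i.e.\ $f(z)\rc g(z)$, for $z\in\dom(a)$. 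Part~(3) is then immediate by intersecting parts~(1) and~(2), since $\hc=\lc\cap\rc$.

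For part~(4) I would use $\dc=\rc\circ\lc$: $(f,a)\dc(g,b)$ iff some $(h,c)$ is $\rc$-related to $(f,a)$ and $\lc$-related to $(g,b)$. By parts~(1) and~(2) such an $(h,c)$ exists iff there is a partial permutation $c$ with $\dom(c)=\dom(a)$, $\ran(c)=\ran(b)$ and a function $h$ on $\dom(a)$ with $f(z)\rc h(z)$ for $z\in\dom(a)$ and $h^{c^{-1}}(w)\lc g^{b^{-1}}(w)$ for $w\in\ran(b)$. Setting $x=bc^{-1}$, a bijection $\dom(b)\ra\dom(a)$, and writing a generic $w\in\ran(b)$ as $w=zb$ with $z\in\dom(b)$, one has $g^{b^{-1}}(w)=g(z)$ and $h^{c^{-1}}(w)=h(zbc^{-1})=h(zx)$, so the two requirements become $f(zx)\rc h(zx)$ and $h(zx)\lc g(z)$ for all $z\in\dom(b)$, i.e.\ $f(zx)\dc g(z)$ for all $z\in\dom(b)$. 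Conversely, given a bijection $x\colon\dom(b)\ra\dom(a)$ with $f(zx)\dc g(z)$ for all $z$, one invokes $\dc=\rc\circ\lc$ in $\ipwr{k-1}$ to choose, for each $z$, an element $h_z$ that is $\rc$-related to $f(zx)$ and $\lc$-related to $g(z)$, puts $h(zx)=h_z$ (so $\dom(h)=\dom(a)$) and $c=x^{-1}b$; a short check shows $(h,c)\in\ipwr{k}$ and witnesses $(f,a)\dc(g,b)$. Part~(5) then follows at once, since $\ipwr{k}$ is finite and $\dc=\jc$ in every finite semigroup.

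The only genuinely delicate point is part~(4): one must keep precise track of how the operation $(\,\cdot\,)^{a^{-1}}$ composes with products of partial permutations (identities such as $h^{c^{-1}}(zb)=h(zbc^{-1})=h(zx)$ for $x=bc^{-1}$) and must make sure that the intermediate element $(h,c)$ built in the converse direction really respects the constraint $\dom(h)=\dom(c)$ defining $\ipwr{k}$, with the choices of $h_z$ made coherently. Parts~(1)--(3) and~(5) are purely formal once inverses and the two associated idempotents have been written down, and no fresh induction is required beyond invoking the already-established inverse-semigroup structure of $\ipwr{k-1}$.
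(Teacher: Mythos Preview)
Your argument is correct. For parts~(1)--(3) you take a genuinely different route from the paper: rather than working directly with the definition of $\lc$ and $\rc$ via left and right multipliers (the paper produces explicit $(u,x),(v,y)$ with $(u,x)(f,a)=(g,b)$ and $(v,y)(g,b)=(f,a)$, and then extracts the stated conditions from the resulting system of equations), you exploit the inverse-semigroup structure by computing the idempotents $(f,a)^{-1}(f,a)$ and $(f,a)(f,a)^{-1}$ and comparing them componentwise. Your approach is shorter and more transparent---once the formula for $(f,a)^{-1}$ is written down, the characterizations of $\lc$ and $\rc$ fall out in one line each---whereas the paper's approach has the minor advantage of exhibiting the witnessing multipliers explicitly and of not relying on the inverse structure of $\ipwr{k-1}$. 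For part~(4) both arguments proceed through the factorization $\dc=\lc\circ\rc$, differing only in the order of the two factors: you pass through an intermediate $(h,c)$ with $(f,a)\rc(h,c)\lc(g,b)$, while the paper uses $(f,a)\lc(h,c)\rc(g,b)$; your substitutions $x=bc^{-1}$ and $c=x^{-1}b$ are the mirror images of the paper's choices, and the verification that $(h,c)$ lies in $\ipwr{k}$ and witnesses the relation goes through as you describe. Part~(5) is handled identically in both.
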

\begin{proof}
Green's relations  on semigroup $\isd$ are described in \cite{GM}.
\begin{enumerate}
\item Let $(f,a) \lc (g,b)$, then there exist $(u,x), (v,y) \in
\ipwr{k}$ such that $(u,x)(f,a)=(g,b)$ and $(v,y)(g,b)=(f,a)$,
i.e.
\begin{gather*}
(u,x)(f,a)=(uf^x,xa)=(g,b)\\
(v,y)(g,b)=(vg^y, yb)=(f,a)
\end{gather*}
We get from these equalities that $xa=b, yb=a$, and therefore $a
\lc b$ and then
 $\ran(a)=\ran(b)$, and also we get  $uf^x=g, vg^y=f$. Multiplying the
 both sides of the equality $xa=b$ by $a^{-1}$ from the
 left and by $b^{-1}$ from the right we obtain $b^{-1}x=a^{-1}$.
 Analogously we obtain  $a^{-1}y=b$.
Put $t=zb^{-1}$ for any $z\in \dom(b^{-1})=\ran(b)$, then
\begin{gather*}
uf^x(t)=g(t)\\
u(t)f(tx)=g(t)\\
u(zb^{-1})f(zb^{-1}x)=u(zb^{-1})f(za^{-1})=g(zb^{-1})\\
u(zb^{-1})f^{a^{-1}}(z)=g^{b^{-1}}(z)
\end{gather*}
Putting $t=za^{-1}$ for any $z \in \ran(a)=\ran(b)$, we
analogously get $v(za^{-1})g^{b^{-1}}(z)=f^{a^{-1}}(z)$. We have
$u(zb^{-1})f^{a^{-1}}(z)=g^{b^{-1}}(z)$ and
$v(za^{-1})g^{b^{-1}}(z)=f^{a^{-1}}(z)$. This implies
$f^{a^{-1}}(z) \lc g^{b^{-1}}(z)$ , $z\in \ran(a)=\ran(b)$.

Conversely, let $\ran(a)=\ran(b)$ and $f^{a^{-1}}(z) \lc
g^{b^{-1}}(z)$ $\forall \, z\in \ran(a)=\ran(b)$. From the first
condition we get  $a \lc b$, and hence there exist $x, y \in \isd$
such that $xa=b, yb=a$. From the second condition it follows that
there exist functions $u, v \in \espn{k}$ such that
$u(z)f^{a^{-1}}(z)=g^{b^{-1}}(z)$ and
$v(z)g^{b^{-1}}(z)=f^{a^{-1}}(z)$ , $z \in \ran(a)=\ran(b)$.
Consider $(u,x), (v,y) \in \ipwr{k}$, where $x,y,u,v$ are defined
as above. Then
$$(u,x)(f,a)=(uf^x,xa)=(uf^{ba^{-1}},b)=(g^{bb^{-1}},b)=(g,b)$$
and in the same way we get $(v,y)(g,b)=(f,a)$. Therefore $(f,a)
\lc (g,b)$.

\item  Let $(f,a)$ $\rc$ $(g,b)$, then there exist $(u,x), (v,y)
\in \ipwr{k}$ such that $(f,a)(u,x)=(g,b), \;(g,b)(v,y)=(f,a)$.
This is equivalent to  $ax=b, by=a, fu^a=g, gv^b=f$. This gives us
 the conditions $a$ $\rc$ $b$ , and hence $\dom(a)=\dom(b)$,  and
$fu^a=g, gv^b=f$. Consequently,  $f(z) \rc g(z) \; \forall \, z\in
\dom(a)$.

Conversely, let $(f,a), (g,b) \in \ipwr{k}$ and $\dom(a)=\dom(b)$,
$f(z) \rc g(z) \; \forall \, z\in \dom(a)$. From $\dom(a)=\dom(b)$
it follows $a \rc b$, then there exists $x,y \in \isd$ such that
$ax=b, by=a$, and from $f(z) \rc g(z) \; \forall \, z\in \dom(a)$
it follows that there exist $u', v' \in \espn{k-1}$ such that for
any $z \in \dom(a)$ $fu'(z)=g(z), gv'(z)=f(z)$. Define $u, v \in
\espn{k-1}$ by $u(za)=u'(z), \, v(zb)=v'(z)$. Then for $t \in
\dom(a)$ it holds $fu^a(t)=f(t)u(ta)=f(t)u'(t)=g(t)$ and
$gv^b(t)=f(t)$, then
\begin{gather*}
 (f,a)(u,x)=(fu^a, ax)=(g,b);\\
 (g,b)(v,y)=(f,a).
\end{gather*} Therefore, $(f,a) \rc (g,b)$.

\item As $\hc=\lc \wedge \rc$, this statement follows from the
first and second ones.

\item Let $(f,a) \dc (g,b)$. Then there exist $(h,c) \in \ipwr{k}$
such that $(f,a) \lc (h,c)$ and $(h,c) \rc (g,b)$. From $(f,a) \lc
(h,c)$ we get that $\ran(a)=\ran(c)$ and for $z\in \ran(a)$
$f^{a^{-1}}(z) \lc h^{c^{-1}}(z)$. Then there exist functions $u$
and $v$ such that $u(z)f^{a^{-1}}(z)=h^{c^{-1}}(z)$ and
$v(z)h^{c^{-1}}(z)=f^{a^{-1}}(z)$. Put $x=a^{-1}c$. By definition
of $\isd$ $x$ is a partial bijection map. We now obtain
 $f(zx) \lc h(z)$, and  $x: \dom(c)\ra \dom(a)$. From
$(h,c) \rc (g,b)$ we have that for $z \in \dom(b)$: $h(z) \rc
g(z)$ and $\dom(b)=\dom(c)$. From $\ran(a)=\ran(c)$ and
$\dom(b)=\dom(c)$ we get $|\dom(a)|=|\dom(b)|$.
 Thus there a exists bijection $x: \dom(b) \ra \dom(a)$ such that $f(zx) \dc h(z), z\in
\dom(b) \cap \ran(a)$.

Conversely, assume that 
there exists a  bijection map
 $x:\dom(b) \ra \dom(a)$ such that $f(zx) \dc g(z)$, i.e. there
 exists a function $h(z)$ such that $f(zx) \lc h(z)$ and $h(z) \rc
g(z)$. Let $u'(z), v'(z) \in \spn $ satisfy conditions
$u'(z)f(zx)=u'f^x(z)=h(z)$ and $v'(z)h(z)=f(zx)$. 
Put $c=xa$, then $c$ is partial bijection $c:\dom(b) \ra \ran(a)$
exists. Define $u(z)$ by $u(z)=u'(z)$ and $v(z)$ by
$v(z)=v'(zx^{-1})$. Then
\begin{gather*}
(u,x)(f,a)=(uf^x,xa)=(h,c), \\
(v,x^{-1})(h,c)=(f,a).\end{gather*} Hence $(f,a) \lc (h,c)$. As
$h(z) \rc g(z)$ and $\dom(c)=\dom(b)$, then $(h,c) \rc (g,b)$. It
implies $(f,a) \dc (g,b).$
 \item As $\ipwr{k}$ is finite  then $\dc =\jc$.
\end{enumerate}
\end{proof}

\begin{corr}
If $(f,a), (g,b) \in \wisd$, then
\begin{enumerate} \rm
 \item $(f,a)\lc(g,b)$ if and only if
$\ran(a)=\ran(b)$ and $\ran(g^{a^{-1}}(z))=\ran(f^{b^{-1}}(z))$
for all $z \in \ran(a)$;

\item $(f,a)$ $\rc$ $(g,b)$ if and only if $\dom(a)=\dom(b)$ and
$\dom(f(z))=\dom(g(z))$ for all $z \in \dom(a)$ ;

\item  $(f,a)$ $\hc$ $(g,b)$ if and only if $\ran(a)=\ran(b)$,
$\dom(a)=\dom(b)$, $\ran(g^{a^{-1}}(z))=\ran(f^{b^{-1}}(z))$ for
$z \in \ran(a)$,  and $\dom(f(z))=\dom(g(z))$ for $z \in \dom(a)$.
\end{enumerate}
\end{corr}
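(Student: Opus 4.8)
The plan is to derive the corollary as the case $k=2$ of Theorem~\ref{green}, since $\wisd=\isd\pwr\isd=\ipwr{2}$, together with the classical description of Green's relations on $\isd$ recorded in \cite{GM}: for $\alpha,\beta\in\isd$ one has $\alpha\lc\beta$ iff $\ran(\alpha)=\ran(\beta)$, $\alpha\rc\beta$ iff $\dom(\alpha)=\dom(\beta)$, and $\alpha\hc\beta$ iff $\dom(\alpha)=\dom(\beta)$ and $\ran(\alpha)=\ran(\beta)$. The key point is that when $k=2$ the function-component $f$ of a pair $(f,a)\in\wisd$ takes values in $\ipwr{1}=\isd$, so each ``inner'' condition $f(z)\lc g(z)$, $f(z)\rc g(z)$, $g^{b^{-1}}(z)\lc f^{a^{-1}}(z)$ appearing in Theorem~\ref{green} is a relation between two genuine partial permutations, which the facts above rewrite as an equality of ranges or of domains.

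Concretely, for part~(2) I would invoke Theorem~\ref{green}(\ref{it:itr}) with $k=2$: it says $(f,a)\rc(g,b)$ iff $\dom(a)=\dom(b)$ and $f(z)\rc g(z)$ for all $z\in\dom(a)$; since $f(z),g(z)\in\isd$, the latter is the equality $\dom(f(z))=\dom(g(z))$, as claimed. Part~(1) is the $\mathcal L$-analogue: Theorem~\ref{green}(\ref{it:itl}) with $k=2$ gives $(f,a)\lc(g,b)$ iff $\ran(a)=\ran(b)$ and $g^{b^{-1}}(z)\lc f^{a^{-1}}(z)$ for all $z\in\ran(a)$, and since $g^{b^{-1}}(z),f^{a^{-1}}(z)\in\isd$ this inner condition becomes equality of their ranges for each $z\in\ran(a)$; one should note here that the hypothesis $\ran(a)=\ran(b)$ is precisely what makes $f^{a^{-1}}$ and $g^{b^{-1}}$ both defined on this common set, so the condition is well-posed.

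Finally, part~(3) follows from $\hc=\lc\wedge\rc$: conjoining the characterizations of (1) and (2) yields $\ran(a)=\ran(b)$, $\dom(a)=\dom(b)$, equality of the inner ranges over $z\in\ran(a)$, and equality of the inner domains over $z\in\dom(a)$, which is the statement. Alternatively one may specialize Theorem~\ref{green}(\ref{it:ith}) directly and replace its inner $\mathcal L$- and $\mathcal R$-conditions on $\isd$ by range- and domain-equalities, observing that over $\dom(a)\cap\ran(a)$ the two formulations coincide. I expect no real obstacle: the corollary is a routine transcription of the $k=2$ instance of Theorem~\ref{green}, and the only thing demanding attention is the bookkeeping --- keeping straight which of $a^{-1},b^{-1}$ is composed with which of $f,g$ in the $\mathcal L$-clause, and matching the index sets on which the pointwise conditions are imposed.
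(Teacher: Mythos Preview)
Your proposal is correct and is exactly the approach the paper takes: the corollary is stated without proof, being the immediate specialization of Theorem~\ref{green} to $k=2$ together with the description of Green's relations on $\isd$ from \cite{GM}. Your caution about the bookkeeping is well placed: the statement as printed has the superscripts swapped in the $\mathcal L$-clause (it should read $\ran(g^{b^{-1}}(z))=\ran(f^{a^{-1}}(z))$, matching Theorem~\ref{green}(\ref{it:itl})), which your transcription corrects.
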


\begin{lem}\label{lem:dc}
Let $\s, \tau \in \pa \T{k}$. Then $\s \dc \tau$ if and only if
$\dom(\s) \cong \dom(\tau)$.
\end{lem}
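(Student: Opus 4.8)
The plan is to reduce the statement to the behaviour of $\dc$ on idempotents and then invoke the definition of a partial automorphism of the tree. First I would use that $\pa\T{k}$ is a finite inverse semigroup, so that (as in any inverse semigroup) $\s\dc\tau$ holds precisely when the idempotents $\s\s^{-1}$ and $\tau\tau^{-1}$ are $\dc$-related: one has $\s\rc\s\s^{-1}$ always, and for idempotents $e,f$ the relation $e\dc f$ means there is $\rho$ with $\rho\rho^{-1}=e$ and $\rho^{-1}\rho=f$. Next I would identify these idempotents concretely: for $\s\in\pa\T{k}$ the element $\s\s^{-1}$ is the identity partial automorphism $id_{\g}$ on the (connected, rooted) subgraph $\g$ generated by $\dom(\s)$ --- this is just the usual description of idempotents in a symmetric-type inverse semigroup, since an idempotent partial automorphism fixes every point of its domain. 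Likewise $\tau\tau^{-1}=id_{\Delta}$ for $\Delta$ generated by $\dom(\tau)$.

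The core step is then: $id_{\g}\dc id_{\Delta}$ if and only if there is $\rho\in\pa\T{k}$ with $\dom(\rho)=\g$ and $\ran(\rho)=\Delta$ (namely a $\rho$ with $\rho\rho^{-1}=id_{\g}$ and $\rho^{-1}\rho=id_{\Delta}$). By the very definition of a partial automorphism of $\T{k}$, such a $\rho$ is exactly an isomorphism of the subgraph $\g$ onto the subgraph $\Delta$ (compatible with the grading by levels, since we only deal with level-preserving partial automorphisms); conversely, any such isomorphism, read off as a partial injective self-map of $V\T{k}$, lies in $\pa\T{k}$. Hence $id_{\g}\dc id_{\Delta}\iff\g\cong\Delta$, and combining with the first paragraph, $\s\dc\tau\iff\dom(\s)\cong\dom(\tau)$.

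I expect the main obstacle to be the bookkeeping around the meaning of ``$\cong$'' for subgraphs of $\T{k}$: the isomorphism must respect the partition into levels (equivalently, fix the root on a connected root-containing subgraph), because the partial automorphisms in play preserve levels; so one should check that for the subgraphs that actually occur as domains no generality is lost by passing between ``abstract graph isomorphism'' and ``level-preserving isomorphism''. A second point worth spelling out is the identification of all idempotents of $\pa\T{k}$ with partial identities $id_{\g}$, which underlies the reduction of the first paragraph.

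As an alternative, one can stay inside $\ipwr{k}$ and argue by induction on $k$ using Theorem~\ref{green}(4): unwinding ``there exists a bijection $x:\dom(b)\ra\dom(a)$ with $f(zx)\dc g(z)$'' together with the inductive description of $\dc$ shows that $(f,a)\dc(g,b)$ exactly when the rooted subtrees recording the domains of the two elements are isomorphic, which via the isomorphism $\cpa\T{k}\cong\ipwr{k}$ of Theorem~\ref{thm:paut_wr} is the assertion. I would nonetheless present the idempotent-based argument as the main proof, since it avoids the recursion and matches the geometric phrasing of the lemma.
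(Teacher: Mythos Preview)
Your argument is correct and is essentially the paper's own: both rest on the fact that an element of $\pa\T{k}$ is by definition a subgraph isomorphism between its domain and its range, so that a $\dc$-witness is precisely an isomorphism between the relevant subgraphs. The only cosmetic difference is that the paper works directly with the $\lc$--$\rc$ factorisation of $\dc$ (producing $\gamma$ with $\ran(\gamma)=\ran(\sigma)$ and $\dom(\gamma)=\dom(\tau)$, then composing with $\sigma^{-1}$), whereas you first reduce to the idempotents $\sigma\sigma^{-1}$ and $\tau\tau^{-1}$ and then seek $\rho$ with prescribed domain and range; the substance is identical, and your caveat about level-preservation is a point the paper leaves implicit.
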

\begin{proof}
Let $\s \dc \tau$, then there exists $\gamma \in \pa \T{k}$ such
that $\s \lc \gamma$ and $\gamma \rc \tau$. Thus,
$\ran(\s)=\ran(\gamma)$, $\dom(\gamma)=\dom(\tau)$. By definition
of semigroup $\pa \T{k}$ all these maps are isomorphisms between
their domains and ranges. It immediately follows that map
$\varphi=\gamma \s^{-1}: \dom(\tau) \ra \dom(\s)$ is isomorphism
from $\dom(\tau)$ to $\dom(\s)$, so $\dom(\s) \cong \dom(\tau)$.

Let $\dom(\s) \cong \dom(\tau)$. As before by definition of
semigroup $\pa \T{k}$ it follows $\dom(\s) \cong \ran(\s)$, hence
isomorphism $\gamma: \ran(\s) \ra \dom(\tau)$ exists. Therefore,
$\s \lc \gamma$ and $\gamma \rc \tau$. It implies $\s \dc \tau$.
\end{proof}
\begin{prop} \label{prop:dcl}
The number of $\dc$-classes of semigroup $\ipwr{k}$ equals
$P^k(1)$, where $P(x)=\binom{x+d}{d}$.
\end{prop}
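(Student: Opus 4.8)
The plan is to reduce the count to a purely combinatorial one using Theorem~\ref{thm:paut_wr} and Lemma~\ref{lem:dc}, and then to set up a recursion on $k$. By Theorem~\ref{thm:paut_wr} we have $\ipwr{k}\cong\cpa\T{k}$, so it suffices to count the $\dc$-classes of $\cpa\T{k}$. By Lemma~\ref{lem:dc} two partial automorphisms are $\dc$-equivalent precisely when their domains are isomorphic, so the number of $\dc$-classes equals the number of isomorphism classes of domains of elements of $\cpa\T{k}$, i.e.\ of connected subgraphs of $\T{k}$ that contain the root. Call such a subgraph a rooted subtree of $\T{k}$; up to isomorphism it is a rooted tree of height at most $k$ in which every vertex has at most $d$ children. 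Let $a_k$ be the number of isomorphism classes of such trees; the goal is to show $a_k=P^k(1)$.

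For the base case we note that $\T{0}$ is a single vertex, so $a_0=1=P^0(1)$ (and, as a check, $\ipwr{1}=\isd$ has exactly $d+1$ $\dc$-classes, one for each rank $0,1,\ldots,d$, which is $P(1)$). For $k\ge1$, a rooted subtree $\Gamma$ of $\T{k}$ is determined up to isomorphism by the multiset of isomorphism classes of the rooted subtrees of $\T{k-1}$ hanging below those children of the root that lie in $\Gamma$; if the root has $i$ children in $\Gamma$ this is a size-$i$ multiset over the $a_{k-1}$ available classes, and $0\le i\le d$. Hence
\[
a_k=\sum_{i=0}^{d}\binom{a_{k-1}+i-1}{i}=\sum_{i=0}^{d}\binom{a_{k-1}+i-1}{a_{k-1}-1}=\binom{a_{k-1}+d}{d}=P(a_{k-1}),
\]
where the third equality is the hockey-stick identity $\sum_{i=0}^{d}\binom{n-1+i}{n-1}=\binom{n+d}{n}$ with $n=a_{k-1}$. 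Combined with $a_0=1$, induction on $k$ gives $a_k=P^k(1)$.

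The point requiring care — and the step I would treat as the main obstacle — is the justification that the isomorphism appearing in Lemma~\ref{lem:dc} may be taken to be a rooted, level-preserving isomorphism, so that the number of $\dc$-classes really is the number of isomorphism classes of \emph{rooted} subtrees of $\T{k}$ rather than of abstract graphs; this is forced because every subgraph in sight contains the root and every partial automorphism in $\cpa\T{k}$ preserves the levels of vertices. Once this is settled, the recursion via multisets and the binomial identity are routine, and the formula $P^k(1)$ follows.
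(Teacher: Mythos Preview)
Your argument is correct and follows essentially the same approach as the paper: both reduce via Theorem~\ref{thm:paut_wr} and Lemma~\ref{lem:dc} to counting isomorphism classes of rooted subtrees of $\T{k}$, and both obtain the recursion $a_k=\sum_{i=0}^{d}\binom{a_{k-1}+i-1}{i}=\binom{a_{k-1}+d}{d}=P(a_{k-1})$. The only difference is presentational: the paper builds an explicit linearly ordered set $GRep_k$ of canonical representatives and counts them via non-decreasing functions into $\{0,\dots,N-1\}$, whereas you phrase the same count directly in terms of multisets and the hockey-stick identity; your version is cleaner but mathematically identical.
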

\begin{proof}
By Theorem  \ref{thm:paut_wr} and Lemma \ref{lem:dc}the
calculation of the number of $\dc$-classes of semigroup $\ipwr{k}$
is equivalent to that of the number of non-isomorphic connected
subgraphs of the tree $\T{k}$ containing root vertex. Later on all
subgraphs are supposed to be connected and to contain root vertex.

Partition the set of all connected subgraphs of the tree $\T{k}$
into the classes of isomorphic subgraphs. Define the set of
graphs-representatives denoted by $GRep_k$ in the following way.

Consider firstly one-level $d$-regular tree. It is clear that the
set of all connected subgraphs is divided into $d+1$ class. We
choose a representative from each class and number them with
integers from 0 to $d$  in decreasing order of root vertices
degree. For example, if $d=3$ we have:

\begin{center}
\begin{tabular}{cccc}
\leaf{$\;\;\;\;$}\leaf{$\;\;\;\;$}\leaf{$\;\;\;\;$}\branch{3}{$\;\;\;\;$}\tree
&
\leaf{$\;\;\;\;\;\;$}\leaf{$\;\;\;\;\;\;$}\branch{2}{$\;\;\;\;$}\tree
& \leaf{$\;\;\;\;\;\;$}\branch{1}{$\;\;\;\;$}\tree &
\leaf{$\circ$} \tree
\\

\ & & &  \\
0 & 1 & 2 & 3
\end{tabular}
\end{center}
Define the following order relation on the set of
graphs-representatives $GRep_1$. Let $i_1, i_2$ be the numbers of
graphs $\g_1$ and $\g_2$ respectively. Then $\g_1>\g_2
\Leftrightarrow i_1 < i_2$.

Consider now 2-level tree $\T{2}$. Partition again the set of
connected subgraphs into classes of isomorphic subgraphs. Notice
that each vertex of  the first level of $\T{2}$ is a root vertex
of a one-level subgraph, which is isomorphic to a ceratin subgraph
from the set $GRep_1$. Attach a number sequence $(i_1, i_2,
\ldots, i_l)$ to each subgraph by, where $l$ is a degree of the
root vertex and $i_j$ is the number of subgraph from $GRep_1$
subgraph of $\T{2}$ with root vertex labelled  by $j$ is
isomorphic to. For example, the corresponding sequence for
subgraph

\begin{center}
\footnotesize%
\centering%
\leaf{$\;\;\;\;\;$}%
\leaf{$\;\;\;\;\;$}%
\leaf{$\;\;\;\;\;$}%
\branch{3}{2}%
\leaf{$\;\;\;\;\;$}%
\leaf{$\;\;\;\;\;$}%

\branch{2}{1}%
\leaf{$\;\;\;\;\;$}%
\leaf{$\;\;\;\;\;$}%

\branch{2}{3}%
\branch{3}{} \tree
\end{center}
is $(1,0,1)$. It is evident that connected subgraphs of $\T{2}$
are isomorphic if and only if corresponding sequences are equal up
to the permutation of sequences members. Choose a subgraph
described by non-decreasing corresponding sequence from each class
of isomorphic subgraphs. We call these subgraphs
graphs-representatives and define a linear order relation on the
set of graphs-representatives $GRep_2$ in the following way: let
$\g_1, \g_1 \in GRep_2$ and $a_1=(i_1, i_2, \ldots, i_m)$,
$a_2=(j_1, j_2, \ldots, j_n)$ be corresponding sequences. Then
$\g_1>\g_2$ if and only if $a_1<a_2$, set of sequences is
lexicographically ordered. For instance, if the number sequence
related to subgraph  $\g_1$ is $(0,0,0)$ and the number sequence
related to subgraph  $\g_2$ is $(0,0,1)$, then $\g_1>\g_2$. We
have linearly ordered set and we may arrange
graphs-representatives in decreasing order and number them in such
a way that 0 corresponds to the ``biggest'' graph.

Let now $\g_0>\g_1 > \ldots >\g_N$ be ordered set $GRep_{k-1}$ of
graphs-representatives of $(k-1)$-level tree $\T{k-1}$. Partition
again the set of all connected subgraph of the tree $\T{k}$ into
classes of isomorphic subgraphs. Attach again a number sequence
$(i_1, i_2, \ldots, i_l$) to each subgraph, where $i_j$ is the
number of corresponding graph from $GRep_{k-1}$, $i_j \in \{0, 1,
\ldots, N\}, j=\overline{1,l}, \; l \leq d$, and construct the set
of graphs-representatives $GRep_{k}$ of the $k$-level tree $\T{k}$
as above. It is easy to check that set $GRep_{k}$ has following
properties:
\begin{enumerate}
\item For all subgraph $\g \sbs \T{k}$ there exists subgraph
$\tilde{\g}$ from the set $ GRep_k$ such that $\g \cong
\tilde{\g}$; \item If $\g_1 \cong \g_2$, where $\g_1, \g_2 \sbs
GRep$ , then $\g_1=\g_2$.
\end{enumerate}
Therefore, we have to compute the cardinality of the set of
graphs-representatives $GRep_k$ to find the number of connected
non-isomorphic subgraphs of the tree $\T{k}$ that gives us the
number of $\dc$-classes of semigroup $\ipwr{k}$.

We use induction on $k$ to calculate the cardinality of the set of
graphs-representatives.

If $k=1$, then $\big|GRep_1 \big|=d+1=\binom{d+1}{d}=P(1)$.

Assume that $N=P^{k-1}(1)$  is the cardinality of the set of
graphs-representatives $GRep_{k-1}$ of $(k-1)$-level tree.

Each vertex of the first level of the tree $\T{k}$ is the root
vertex of $(k-1)$-level tree that is isomorphic to a certain graph
from $GRep_{k-1}$. Assume that all vertices of the first level are
labelled with integers from 1 to $l$, $l \leq d$. As set $GRep_k$
contains no equal graphs, then corresponding sequences are all
different. Hence, there exists one-to-one correspondence between
set $\{1,2, \ldots, l\}$ and set $\{0,1, \ldots, N-1\}$. Consider
all non-decreasing functions $f:\{1,2, \ldots, l\}\ra
\{0,1,\ldots, N-1\}$. The number of all such functions is equal to
the cardinality of the set $GRep_k$. Define  $x_0=f(1),
x_1=f(2)-f(1), \ldots, x_{l-1}=f(l)-f(l-1),x_l=N-1-f(l)$. Then
$f(k)=x_0+x_1+x_2+ \ldots +x_k$. Since $f$ is non-decreasing, then
for all $i=1, \ldots, l$ $x_i \geq 0$.So the number of
non-decreasing functions is equal to the number of integer
solutions of the equation $N-1=x_0+x_1+ \ldots + x_l$ for $l=1,
\dots, d$. Thus, we get the number of $\dc$-classes of semigroup
$\ipwr{k}$:
$$\sum_{l=0}^d \binom{N+l-1}{l}=
\binom{N+d}{d}=P(N)=P(P^{k-1}(1))=P^k(1)$$

\end{proof}
Let $\g$ be a subtree of the tree $\T{k}$,  $St_{\T{k}}(\g)$ be
the stabilizer of the subtree $\g$,  $Fix_{\T{k}}(\g)$ be the
fixator of the subtree $\g$ and let  $D_{\g}$ be $\dc$-class such
that for any $\s \in D_{\g}$ $\dom(\s) \cong \g$. Let $\{\g_1,
\ldots, \g_i\}$ be the set of all pairwise non-isomorphic subtrees
of $\g$ with root vertices in the first level of $\g$. Let
$\alpha_j$ be the number of isomorphic to $\g_j$ subtrees of $\g$
with root vertices in the first level of $\g$, $j=1, \ldots, i$.
The type of $\g$ is a set $\{(\g_1,\alpha_1), (\g_2,\alpha_2),
\ldots, (\g_i,\alpha_i)\}$ such that disjoint union of vertices
sets of all subtrees and root vertex gives vertices of $\g$.
Notice that $\sum_{j=1}^i \alpha_j=l$, where $l$ is the degree of
root vertex of $\g$.

\begin{prop}
Let $\g$ be a subtree of type $\{(\g_1,\alpha_1), (\g_2,\alpha_2),
\ldots, (\g_i,\alpha_i)\}$ of the tree $\T{k}$ and degree of the
root vertex of $\g$ be $l$, $l \leq d$. Then
\begin{enumerate}

\item $|\aut \g|=\prod_{j=1}^{i}(\alpha_j)!|\aut
(\g_j)|^{\alpha_j}, l \leq d$

\item $|St_{tx{k}}(\g)|=(d-l)!|\aut \T{k-1}|^{d-l} \prod_{j=1}^i
(\alpha_j)!|St_{\T{k-1}}(\g_j)|^{\alpha_j},$

\item $|Fix_{\T{k}}(\g)=(d-l)!|\aut \T{k-1}|^{d-l}\prod_{j=1}^i
|Fix_{\T{k-1}}(\g_j)|^{\alpha_j}.$
\end{enumerate}
\end{prop}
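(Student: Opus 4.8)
The plan is to prove the three formulas by induction on $k$, exploiting the recursive structure of $\T{k}$ as a root vertex joined to $d$ copies of $\T{k-1}$, and the identification $\cpa \T{k} \cong \ipwr{k}$ from Theorem~\ref{thm:paut_wr}. The key structural observation is that a level-preserving automorphism (resp. stabilizing automorphism, resp. fixating automorphism) of the relevant tree is determined by (i) a permutation of the $d$ level-one vertices that respects the decomposition into isomorphism types, together with (ii) an independent choice, on each level-one subtree, of an isomorphism of the appropriate kind onto its image. Once this is set up, each of the three counts becomes a product over the isomorphism classes $\g_1, \dots, \g_i$ and over the "free" level-one subtrees (those not in $\g$), and the three cases differ only in which group one attaches to each subtree.

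First I would handle (1). An automorphism $\phi \in \aut\g$ permutes the $l$ level-one subtrees of $\g$; since $\phi$ must carry a subtree to an isomorphic one, it preserves the partition into the types $(\g_j, \alpha_j)$. Thus $\phi$ induces, for each $j$, a permutation of the $\alpha_j$ copies of $\g_j$ (giving the factor $(\alpha_j)!$), and then on each such copy $\phi$ restricts to an automorphism of $\g_j$ (giving $|\aut(\g_j)|$ for each of the $\alpha_j$ copies, hence $|\aut(\g_j)|^{\alpha_j}$). Conversely, any such collection of choices assembles to a well-defined automorphism of $\g$. This yields $|\aut\g| = \prod_{j=1}^i (\alpha_j)!\,|\aut(\g_j)|^{\alpha_j}$; note this is essentially the order formula for an iterated wreath product and could also be read off from $\aut\g \cong \prod_j \bigl(\aut(\g_j) \wr S_{\alpha_j}\bigr)$.

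For (2), an element of $St_{\T{k}}(\g)$ is an automorphism of $\T{k}$ that maps $\g$ to $\g$ (setwise). It permutes the $d$ level-one vertices of $\T{k}$; the $l$ vertices carrying the subtrees of $\g$ must be mapped among themselves (respecting types, as in (1)), contributing $\prod_j (\alpha_j)!\,|St_{\T{k-1}}(\g_j)|^{\alpha_j}$ by the same bookkeeping as above but now with the full stabilizer $St_{\T{k-1}}(\g_j)$ on each level-one copy; the remaining $d-l$ vertices may be permuted arbitrarily among themselves, contributing $(d-l)!$, and on each of those $d-l$ subtrees one has a free automorphism of $\T{k-1}$, contributing $|\aut\T{k-1}|^{d-l}$. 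For (3), the fixator $Fix_{\T{k}}(\g)$ consists of automorphisms fixing $\g$ pointwise; a level-one vertex of $\g$ is fixed, so no permutation of the $\alpha_j$ copies is allowed (the factor $(\alpha_j)!$ disappears), and on each level-one subtree of $\g$ one is left with $Fix_{\T{k-1}}(\g_j)$; off $\g$, the $d-l$ spare vertices are again free, giving $(d-l)!\,|\aut\T{k-1}|^{d-l}$.

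The main obstacle is making the "conversely, any such collection assembles" direction fully rigorous — i.e. checking that independently chosen maps on the level-one subtrees, together with a compatible permutation of level-one vertices, really do glue to a single level-preserving partial automorphism of the ambient tree, and that nothing is double-counted. This is where one must be careful that the chosen permutation of the $\alpha_j$ isomorphic copies, composed with the chosen isomorphisms on each copy, does not secretly coincide with a different such pair; the standard wreath-product normal-form argument resolves this, and it is exactly the content that lets one pass the factor $(\alpha_j)!$ and the factor $|\aut(\g_j)|^{\alpha_j}$ (or its $St$/$Fix$ analogues) out as an honest product. I would also remark that $|\aut\T{k-1}|$ and $|St_{\T{k-1}}(\cdot)|$, $|Fix_{\T{k-1}}(\cdot)|$ are well-defined by the inductive hypothesis, so the recursion closes.
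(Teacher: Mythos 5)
Your proposal is correct and follows essentially the same route as the paper: induction on $k$, decomposing an automorphism (resp.\ stabilizing or fixating map) into a type-preserving permutation of the first-level vertices together with independent choices on the level-one subtrees, which yields the factors $(\alpha_j)!$, $|\aut(\g_j)|^{\alpha_j}$ (or their $St$/$Fix$ analogues), and $(d-l)!\,|\aut \T{k-1}|^{d-l}$ for the spare vertices. Your explicit attention to the ``gluing'' direction and the wreath-product normal form is somewhat more careful than the paper's sketch, but it is the same argument.
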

\begin{proof}
\begin{enumerate}
\item Prove the proposition by induction on $k$. Let $\g$ be
one-level tree and root vertex degree be $l \leq d$, then
$|\aut\g|=l!$.

Assume we know the orders of groups $\aut \g_j$ for all $j=1,
\ldots, i$. Find the order of $\aut \g$. Degrees of root vertices
of isomorphic trees are equal. Let them be $l$. It is clear that
types of all trees isomorphic to $\g$ are equal up to the
permutation of items. Thus only permutation of the first level
vertices, and consequently permutation of subtrees of $\g$,
distinguishes graph $\g$ from isomorphic one. All the vertices of
the first level may permute, but with several restrictions,
namely, roots of non-isomorphic subtrees stay roots of
non-isomorphic subtrees. Since the orders of $\aut \g_j$ for all
$j=1, \ldots, i$ are known, we can derive the order of $\aut \g$:
$$|\aut \g|= \prod_{j=1}^i (\alpha_j)!|Aut \g_j|^{\alpha_j}.$$

\item The proof is analogous to the proof of the previous
statement.

Consider the stabilizer of subtree $\g$ in the automorphisms group
of the rooted tree $\T{1}$. Let the degree of the root of $\g$ be
$l$. Then it is obvious  that $|St_{\T{1}}(\g)|=l!(d-l)!$.

Assume now that we know the order of $St_{\T{k-1}}(\g)$. Let the
degree of the root of $\g$ be $l$. Then $(d-l)$ vertices of the
first level of $\g$ may permute and each of them is the root of
$(k-1)$-level tree $\T{k-1}$. Among $l$ vertices, as in proof of
the previous statement, distinguish only vertices that are roots
of isomorphic subtrees. Then
$$|St_{\T{k}}(\g)|=(d-l)!|\aut \T{k-1}|^{d-i} \prod_{j=1}^i
(\alpha_j)!|St_{\T{k-1}}|^{\alpha_j}(\g_j).$$

\item Taking into account that fixator of the subtree doesn't
allow vertices permutation of this subtree, the proof is analogous
to the proof of point 2.

\end{enumerate}
\end{proof}

\begin{prop}
The cardinality of the set of idempotents $E(D_\g)$ of class
 $D_\g$ equals
$$|E(D_\g)|=\frac{(d!)^{\frac{1-d^k}{1-d}}}{|St_{\T{k}}(\g)}|.$$
\end{prop}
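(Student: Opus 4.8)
The plan is to count the idempotents in $D_\g$ by identifying them, via Theorem~\ref{thm:paut_wr} and the discussion preceding Lemma~\ref{lem:dc}, with the connected subgraphs of $\T{k}$ that are isomorphic to $\g$ (idempotents of $\pa\T{k}$ being identity maps on subgraphs, and two idempotents lying in the same $\dc$-class exactly when their domains are isomorphic). So $|E(D_\g)|$ is the number of subtrees of $\T{k}$ isomorphic to $\g$.

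Next I would compute this count as an orbit size under the action of the full automorphism group $\aut\T{k}$ on the set of subtrees: since $\aut\T{k}$ acts transitively on the subtrees isomorphic to a fixed $\g$ (any isomorphism between two copies of $\g$ sitting in $\T{k}$ extends to an automorphism of the ambient regular tree, because at every level the remaining branches are freely interchangeable $d$-regular subtrees), the orbit–stabilizer theorem gives
$$|E(D_\g)| = \frac{|\aut\T{k}|}{|St_{\T{k}}(\g)|}.$$
Then it remains to show $|\aut\T{k}| = (d!)^{\frac{1-d^k}{1-d}} = (d!)^{1+d+d^2+\cdots+d^{k-1}}$. This follows by induction on $k$ from the iterated wreath structure $\aut\T{k} \cong \aut\T{k-1}\wr S_d$ (the ordinary, not partial, wreath product), which gives $|\aut\T{k}| = |\aut\T{k-1}|^d \cdot d!$; solving this recurrence with $|\aut\T{1}| = d!$ yields the exponent $\sum_{j=0}^{k-1} d^j = \frac{d^k-1}{d-1} = \frac{1-d^k}{1-d}$.

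The main obstacle — really the only nontrivial point — is justifying transitivity of $\aut\T{k}$ on the set of copies of $\g$, i.e. that an abstract isomorphism $\g_1 \to \g_2$ between two embedded copies extends to a genuine automorphism of $\T{k}$. I would argue this by induction on $k$ paralleling the type decomposition: a copy of $\g$ of type $\{(\g_1,\alpha_1),\dots,(\g_i,\alpha_i)\}$ with root degree $l$ selects $l$ of the $d$ first-level vertices and plants prescribed copies of the $\g_j$ beneath them; an automorphism of $\T{k}$ carrying one such configuration to another is built from a permutation in $S_d$ matching up the chosen first-level vertices (respecting which $\g_j$ sits where) together with, on each branch, an automorphism of $\T{k-1}$ carrying the relevant copy of $\g_j$ to its target — which exists by the inductive hypothesis — and arbitrary automorphisms of $\T{k-1}$ on the $d-l$ unused branches. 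This same bookkeeping is exactly what produces the formula for $|St_{\T{k}}(\g)|$ in the previous proposition, so the argument is consistent with what has already been established, and the final identity is then a one-line induction.
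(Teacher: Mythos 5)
Your proposal is correct and follows essentially the same route as the paper: identify the idempotents of $D_\g$ with the subtrees of $\T{k}$ isomorphic to $\g$, then count these as the orbit of $\g$ under $\aut\T{k}$ via orbit--stabilizer, using $|\aut\T{k}|=(d!)^{\frac{1-d^k}{1-d}}$ from the iterated wreath structure. The paper states this in one line (as a bijection with the coset space $\aut\T{k}/St(\g)$); you merely supply the transitivity argument it leaves implicit.
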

\textit{Proof} follows from one-to-one correspondence between the
set of ranges of idempotents of $D_\g$ and the set $\aut
\T{k}/St(\g)$, and $|\aut \T{k}|=(d!)^{\frac{1-d^k}{1-d}}$.\hfill
$\Box$
\begin{cor} \label{cor:cor1}
The number of $\rc$-classes and the number of ${\lc}$-classes
containing in $\dc$-class $D_\g$ is equal to
$$\frac{(d!)^{\frac{1-d^k}{1-d}}}{|St_{\T{k}}(\g)}|.$$
\end{cor}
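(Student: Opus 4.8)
The plan is to reduce the statement to counting, within a fixed $\dc$-class $D_{\g}$, the number of $\rc$-classes (equivalently, by the left-right symmetry visible in Theorem~\ref{green}, the number of $\lc$-classes). Recall that in an inverse semigroup every $\rc$-class contains exactly one idempotent, so the number of $\rc$-classes in $D_{\g}$ coincides with $|E(D_{\g})|$, which the preceding proposition has already evaluated as $(d!)^{\frac{1-d^k}{1-d}}/|St_{\T{k}}(\g)|$. Thus the corollary is essentially a restatement of the previous proposition together with the standard fact that in an inverse semigroup each $\rc$- and each $\lc$-class in a given $\dc$-class carries precisely one idempotent.

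Concretely, I would proceed as follows. First I would invoke Theorem~\ref{thm:paut_wr} to work inside $\pa\T{k}$ (more precisely $\cpa\T{k}$) instead of $\ipwr{k}$, and recall from Lemma~\ref{lem:dc} that $\s\dc\tau$ iff $\dom(\s)\cong\dom(\tau)$, so that $D_{\g}$ consists exactly of those partial automorphisms whose domain is isomorphic to $\g$. Next I would note that, since $\ipwr{k}$ is an inverse semigroup, the partial automorphisms $\s$ of $\T{k}$ are partial bijections onto their ranges, and the $\rc$-class of $\s$ is determined by $\dom(\s)$ while the $\lc$-class is determined by $\ran(\s)$; consequently the $\rc$-classes inside $D_{\g}$ are in bijection with the subtrees $\g'\sbs\T{k}$ with $\g'\cong\g$, and likewise for the $\lc$-classes. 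Then I would count these subtrees: a subtree isomorphic to $\g$ is obtained from $\g$ by an automorphism of $\T{k}$, and two automorphisms yield the same subtree iff they differ by an element of the stabilizer $St_{\T{k}}(\g)$; hence the number of such subtrees is the index $[\aut\T{k}:St_{\T{k}}(\g)] = |\aut\T{k}|/|St_{\T{k}}(\g)|$. Finally I would substitute the value $|\aut\T{k}|=(d!)^{\frac{1-d^k}{1-d}}$ recorded in the proof of the previous proposition to obtain the claimed formula for both counts.

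The only genuinely delicate point is the bijection between $\rc$-classes in $D_{\g}$ and subtrees $\g'\cong\g$: I should make sure that every subtree $\g'$ of $\T{k}$ isomorphic to $\g$ actually arises as $\dom(\s)$ for some $\s\in D_{\g}$ (take $\s$ to be an isomorphism $\g'\to\g''$ for any suitable $\g''$, which exists because $\g'\cong\g\cong\ran$-side subtree), and that distinct domains give distinct $\rc$-classes while the same domain gives the same $\rc$-class — both of which follow from part~\eqref{it:itr} of Theorem~\ref{green} once transported through the isomorphism of Theorem~\ref{thm:paut_wr}, since $(f,a)\rc(g,b)$ iff $\dom(a)=\dom(b)$ and $f(z)\rc g(z)$ for all $z$, i.e. iff the two partial automorphisms have literally the same domain subtree. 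The analogous assertion for $\lc$-classes uses part~\eqref{it:itl} and ranges, and the transitivity of $\dc$ within a $\dc$-class guarantees that $D_{\g}$ is a single block containing both all the relevant $\rc$- and $\lc$-classes, so the two counts agree and equal $|\aut\T{k}|/|St_{\T{k}}(\g)|$ as required.
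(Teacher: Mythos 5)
Your proposal is correct and follows the paper's own argument: the corollary is exactly the observation that in an inverse semigroup each $\rc$-class and each $\lc$-class contains precisely one idempotent, so the counts coincide with $|E(D_\g)|$ as computed in the preceding proposition. The additional material in your second and third paragraphs (identifying $\rc$-classes with domains and counting subtrees isomorphic to $\g$ via the index of the stabilizer in $\aut \T{k}$) is a sound but redundant re-derivation of that proposition rather than a new route.
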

\textit{Proof} follows from the fact that in inverse semigroup
every $\lc$-class and every $\rc$-class contains exactly one
idempotent. \hfill $\Box$
\begin{cor} \label{cor:cor2}
The cardinality of $\hc$-class containing in $\dc$-class $D_\g$ is
equal to $|\aut \g|$.
\end{cor}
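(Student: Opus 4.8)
\emph{Proof plan.} The plan is to reduce the statement to the computation of a single maximal subgroup. First I would invoke the standard fact from the theory of Green's relations (an immediate consequence of Green's Lemma, valid in any semigroup): any two $\hc$-classes lying inside one and the same $\dc$-class are equipotent. Hence it suffices to exhibit one particular $\hc$-class contained in $D_\g$ and to count its elements.

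Via the isomorphism $\ipwr{k} \cong \cpa \T{k}$ of Theorem~\ref{thm:paut_wr} I would work inside $\cpa \T{k}$ and pick the idempotent $e = id_\g$, the identity partial automorphism of a fixed copy of the subtree $\g$ sitting inside $\T{k}$. Since $\dom(e) = \g \cong \g$, Lemma~\ref{lem:dc} gives $e \in D_\g$, so the whole $\hc$-class $H_e$ is contained in $D_\g$. Because $\cpa \T{k}$ is an inverse semigroup, $H_e$ is the maximal subgroup of $\cpa \T{k}$ with identity element $e$; concretely, $H_e$ consists of those $\s \in \cpa \T{k}$ with $\s \s^{-1} = \s^{-1}\s = e$, that is, of those partial automorphisms $\s$ with $\dom(\s) = \ran(\s) = \g$.

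The key step is then to identify this group. A partial automorphism $\s \in \cpa \T{k}$ with $\dom(\s) = \ran(\s) = \g$ is exactly a bijection of the vertex set of $\g$ onto itself that fixes the root, preserves levels and respects adjacency, i.e. an element of $\aut \g$; conversely, every such automorphism of $\g$, extended by the empty map on $V\T{k} \setminus V\g$, is a partial automorphism of $\T{k}$ of the required type. Therefore $H_e = \aut \g$ as groups, whence $|H_e| = |\aut \g|$. Combining this with the equipotence of all $\hc$-classes inside $D_\g$ gives the claim.

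The only genuinely delicate point is this last identification: one must be careful that ``partial automorphism of $\T{k}$ with domain and range both equal to $\g$'' coincides with ``root-fixing, level-preserving automorphism of the subgraph $\g$'', i.e. with exactly the group $\aut \g$ appearing in the statement (and in the preceding proposition); the remaining ingredients are only the definition of an inverse semigroup and the routine behaviour of Green's relations.
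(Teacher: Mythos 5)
Your proof is correct, and it reaches the count by a slightly different route than the paper. The paper's own argument characterizes an arbitrary $\hc$-class directly: $\s \hc \tau$ in $\pa \T{k}$ if and only if $\dom(\s)=\dom(\tau)$ and $\ran(\s)=\ran(\tau)$, so the $\hc$-class of any $\s \in D_\g$ is the set of all level-preserving isomorphisms from the fixed subtree $\dom(\s)\cong\g$ onto the fixed subtree $\ran(\s)\cong\g$, a set in bijection with (a coset of) $\aut \g$. You instead invoke Green's Lemma to say all $\hc$-classes inside $D_\g$ are equipotent, pick the idempotent $\hc$-class at $e=id_\g$, and identify that maximal subgroup with $\aut\g$. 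Both arguments are sound and rest on the same kernel: the identification of ``partial automorphism of $\T{k}$ with prescribed domain and range isomorphic to $\g$'' with the automorphisms of $\g$ itself, which is precisely what the paper's ``the statement is now obvious'' step leaves implicit. The paper's version makes every $\hc$-class transparent without appealing to Green's Lemma; yours isolates the group-theoretic content (the maximal subgroup at $id_\g$ is $\aut\g$) and lets the general theory transport the count to the other $\hc$-classes. The point you flag as delicate is handled correctly: since elements of $\cpa\T{k}$ fix the root and preserve levels, a partial automorphism with domain and range equal to $\g$ is exactly a rooted-tree automorphism of $\g$.
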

\begin{proof}
Let $\s,\tau \in \pa \T{k}$. Then $\s \hc \tau$ if and only if
$\dom(\s)=\dom(\tau)$ and $\ran(\s)=\ran(\tau)$. The statement is
now obvious.
\end{proof}
\begin{cor}
$|D_{\g}| =|E(D_\g)|^2|Aut \g|$.
\end{cor}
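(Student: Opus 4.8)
The plan is to decompose the $\dc$-class $D_\g$ into its $\lc$-classes and $\rc$-classes using the egg-box picture valid in any semigroup, and then to count. First I would recall the standard fact (true in every semigroup) that a $\dc$-class is the disjoint union of its $\lc$-classes, which in turn is the disjoint union of its $\hc$-classes, and that inside a single $\dc$-class every $\hc$-class has the same cardinality; moreover the $\hc$-classes are indexed by pairs (an $\rc$-class, an $\lc$-class) of the $\dc$-class. Hence $|D_\g|$ equals the number of $\rc$-classes in $D_\g$, times the number of $\lc$-classes in $D_\g$, times the common size of an $\hc$-class contained in $D_\g$.

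Next I would substitute the quantities already computed. By Corollary \ref{cor:cor1}, both the number of $\rc$-classes and the number of $\lc$-classes contained in $D_\g$ equal $\dfrac{(d!)^{\frac{1-d^k}{1-d}}}{|St_{\T{k}}(\g)|}$, which by the Proposition preceding Corollary \ref{cor:cor1} is exactly $|E(D_\g)|$. By Corollary \ref{cor:cor2}, the cardinality of any $\hc$-class contained in $D_\g$ is $|\aut \g|$. Multiplying the three factors gives
$$|D_\g| = |E(D_\g)| \cdot |E(D_\g)| \cdot |\aut \g| = |E(D_\g)|^2\,|\aut \g|,$$
as claimed.

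The only genuine content to justify is the egg-box decomposition, i.e. that $|D_\g|$ factors as (number of $\rc$-classes)$\times$(number of $\lc$-classes)$\times$(size of one $\hc$-class). The cleanest route is to invoke Green's lemma: for $\ipwr{k}$, which by Theorem \ref{thm:paut_wr} is (isomorphic to) an inverse semigroup, within a fixed $\dc$-class any two $\hc$-classes are in bijection, each $\rc$-class is partitioned into $\hc$-classes indexed by the $\lc$-classes of the $\dc$-class, and symmetrically for $\lc$-classes; this is the main obstacle only in the sense that it is the one step requiring an appeal to general semigroup theory rather than to the earlier results of the paper. Everything else is bookkeeping: assembling Corollary \ref{cor:cor1}, Corollary \ref{cor:cor2}, and the formula $|E(D_\g)| = (d!)^{\frac{1-d^k}{1-d}}/|St_{\T{k}}(\g)|$ into the product above.
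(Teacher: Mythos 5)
Your proposal is correct and follows essentially the same route as the paper, which simply cites Corollaries \ref{cor:cor1} and \ref{cor:cor2}; you have merely made explicit the standard egg-box decomposition (Green's lemma) that the paper leaves implicit, namely that $|D_\g|$ is the product of the number of $\rc$-classes, the number of $\lc$-classes, and the common cardinality of an $\hc$-class. Note only that Green's lemma holds in any semigroup, so the appeal to Theorem \ref{thm:paut_wr} for inverseness is not needed at this step.
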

\textit{Proof} follows from corolaries \ref{cor:cor1} and
\ref{cor:cor2}. \hfill $\Box$


\begin{thebibliography}{10}
\bibitem{Meldrum}
Meldrum, J.D.P. \emph{Wreath products of groups and semigroups}.
Longman, 1995.
\bibitem{Petr} Petrich, M. \emph{Dense
extensions of completely 0-simple semigroups}. I.J.Reine
Angew.Math., 1973, 258, pp.103-125.
\bibitem{Hou} Houghton, C.H.
\emph{Embedding inverse semigroups in wreath products}. Glasgow
Math.J., 1976, 17, pp.77-82.

\bibitem{GM}
Ganyushkin, O. and V.Mazorchuk, \emph{The full finite inerse
symmetric semigroup $\mathcal{IS}_n$, Preprint 2001:37, Chalmers
University of Technology and G$\ddot{o}$teborg University,
G$\ddot{o}$teborg, 2001.}



\end{thebibliography}
\end{document}